\newtheorem{thm}{Theorem}[section]
\newtheorem{definition}[thm]{Definition}
\newtheorem{remark}[thm]{Remark}
\title[$T$-avoiding spherical codes in $\mathbb{R}^{32}$]{On $T$-avoiding spherical codes and designs in $\mathbb{R}^{32}$}
\begin{document}

\author[P. Boyvalenkov]{P. Boyvalenkov}
\address{ Institute of Mathematics and Informatics, Bulgarian Academy of Sciences,
8 G Bonchev Str., 1113  Sofia, Bulgaria}
\email{peter@math.bas.bg}

\author[D. Cherkashin]{D. Cherkashin}
\address{ Institute of Mathematics and Informatics, Bulgarian Academy of Sciences,
8 G Bonchev Str., 1113  Sofia, Bulgaria}
\email{jiocb@math.bas.bg}

\author[P. Dragnev]{P. Dragnev}
\address{ Department of Mathematical Sciences, Purdue University 
Fort Wayne, Fort Wayne, IN 46805, USA }
\email{dragnevp@purdue.edu}

\author[D. Yorgov]{D. Yorgov}
\address{ Department of Mathematical Sciences, Purdue University 
Fort Wayne, Fort Wayne, IN 46805, USA }
\email{yorgovd@purdue.edu}

\author[V. Yorgov]{V. Yorgov}
\address{ Department of Mathematics and Computer Science, Fayetteville State University, Fayetteville, NC 28301, USA }
\email{vyorgov@gmail.com}

\begin{abstract}
In this article, we show that the minimal vectors of the extremal even unimodular lattices in $\mathbb{R}^{32}$ are $T$-avoiding universally optimal for suitable sets $T$. Moreover, they are minimal $T$-avoiding spherical designs and maximal $T$-avoiding codes for appropriate choices of $T$.
\end{abstract}

\maketitle

\section{Introduction}

Let $\mathbb{S}^{n-1}=\{\mathbf{x} = (x_1,\ldots,x_n): x_1^2+\cdots+x_n^2=1\}$ be the 
unit sphere in $\mathbb{R}^n$. A spherical code is a non-empty finite set $C \subset \mathbb{S}^{n-1}$. Consider  
\[ I (C):=\{ \mathbf{x \cdot y} : \mathbf{x}, \mathbf{y} \in C, \mathbf{x} \neq \mathbf{y}\}, \]
the set of all inner products of distinct points of $C$, and denote by 
$s = \max I(C)$ the largest inner product (also known as a maximal cosine) of $C$. We refer to $C$ as an $s$-code or as a spherical $(n,N,s)$-code, where $|C|=N$.

We are interested in spherical codes and designs (in particular, $s$-codes and $\tau$-designs) which avoid some sets of inner products in $[-1,1)$.

\begin{definition}
Let $T \subset [-1,1)$. A spherical code $C \subset \mathbb{S}^{n-1}$  is called $T$-avoiding if $I(C) \cap T = \varnothing$. If $C$ is an $s$-code, then we assume $T \subset [-1,s)$ to avoid trivial cases. 
\end{definition}

Spherical codes that are {\it well-distributed} have many applications in various fields. Of course, well-distributed can have many meanings (and so many applications). To quantify it, the notion of the discrete $h$-energy of a spherical code $C$ for a given interaction potential $h$ is considered (see, e.g.,~\cite{BHS2019,CS} and references therein).

\begin{definition}
Given a function $h:[-1,1]\to (-\infty,+\infty]$, continuous on $(-1,1)$, we consider the $h$-energy of a spherical code $C$ as 
\begin{equation} \label{h-energy} E_h (C):=\sum_{\mathbf{x} \neq \mathbf{y} \in C} h( \mathbf{x \cdot y}).
\end{equation} 
A spherical code $C$ is called \emph{universally optimal}~\cite{CK2007}, if it has minimum $h$-energy among all codes on $\mathbb{S}^{n-1}$ of cardinality $|C|$ for all absolutely monotone potentials\footnote{A potential function $h$ is absolutely monotone if its derivatives are nonnegative, i.e. $h^{(j)}\geq 0$, $j\geq 1.$ } $h$.
\end{definition}

The notion of $T$-avoiding universally optimal codes is then introduced in a natural way.

\begin{definition}
Given $T\subset [-1,1)$, a $T$-avoiding code $C$ is called \emph{$T$-avoiding universally optimal} if for any absolutely monotone potential $h$ it has minimum $h$-energy among all $T$-avoiding codes on $\mathbb{S}^{n-1}$ with cardinality $|C|$. 
\end{definition}

An important meaning of good distribution is given by the concept of spherical designs~\cite{DGS}. 

\begin{definition}
A spherical $\tau$-design is a spherical 
code $C \subset \mathbb{S}^{n-1}$ such that
\begin{equation} \label{MultiPolQ}
\int_{\mathbb{S}^{n-1}} p(\mathbf{x}) d\sigma_n(\mathbf{x})= \frac{1}{|C|} \sum_{\mathbf{x} \in C} p(\mathbf{x}) 
\end{equation}
($\sigma_n $ is the normalized surface measure) holds for all polynomials $p(\mathbf{x}) = p(x_1,x_2,\ldots,x_n)$ of total degree at most $\tau$. 
\end{definition}

Apparently, codes with small $|I(C)|$ which are spherical $\tau$-designs with large $\tau$ are good candidates for being well-distributed. In this paper we consider a class of codes on $\mathbb{S}^{31}$ which arise from even extremal unimodular lattices in $\mathbb{R}^{32}$. Such codes are spherical $7$-designs and have $|I(C)|=6$ as $0,-1 \in I(C)$ and the set $I(C) \setminus \{-1\}$ is symmetric about $0$. With suitably chosen $T$, these codes are $T$-avoiding universally optimal and enjoy optimality as maximal codes and minimal designs in their class. We provide corresponding statements in Sections 4--6.

The interest in $T$-avoiding spherical codes came apparently after
Gon{\c{c}}alves and Vedana~\cite{GV2025} proved that four known extremal even unimodular lattices in $\mathbb{R}^{48}$ have the maximal possible density among the lattices with certain forbidden distances. The proof in~\cite{GV2025} continues the research of  Viazovska~\cite{V} and Cohn--Kumar--Miller--Radchenko--Viazovska~\cite{CKMRV17,CKMRV22} who show the sphere packing optimality of $E_8$ and Leech lattices. 
In~\cite{BC2025}, $T$-avoiding $(48, 52 416 000,1/2)$-codes with $T=(-1/3,-1/6) \cup (1/6,1/3)$ were proved to be maximal as $T$-avoiding $1/2$-codes (with certain mild assumptions) and minimal as $T$-avoiding spherical 11-designs. The universal optimality of these codes has been studied in~\cite{BD2025}. The paper~\cite{BCD2025} considers $T$-avoiding codes in other dimensions.

\section{Preliminaries}

We apply a linear programming technique which is briefly explained in this section. 

For a given dimension $n$, we associate the sequence of Gegenbauer polynomials $\left(P_i^{(n)}\right)_{i=0}^{\infty}$ (see ~\cite{Szego1975}), that are orthogonal on $[-1,1]$ with weight $w(t) := (1-t^2)^{(n-3)/2}$, which we normalize by $P_i^{(n)}(1) = 1$. 
Given a code $C\subset \mathbb{S}^{n-1}$, the $i$-th moment of $C$, $i \geq 1$, is defined by
\[
M_i(C):=\sum_{\mathbf{x}, \mathbf{y} \in C} P_i^{(n)}( \mathbf{x \cdot y}).
\]
A key property of Gegenbauer polynomials is their positive definiteness~\cite{Sch42}, which means that $M_i (C) \geq 0$ for every code $C$ and positive integer $i$.

A code $C$ is a spherical $\tau$-design if and only if $M_i(C)=0$, $i=1,\dots,\tau$. Note also that for an antipodal $C$ (such that $C = -C$) it follows that $M_i(C) = 0$ for all odd $i$.

The linear programming technique we use is based on building a function $f$ with a proper sign on $[-1,1]\setminus T$ and proper coefficients in the Gegenbauer expansion
\begin{equation} \label{geg-exp}
f(t) = \sum_{i=0}^{\deg f} f_i P_i^{(n)}(t).
\end{equation}
The Fourier coefficients in~\eqref{geg-exp} can be computed in a standard way.

The following key identity (see~\cite{DGS,Lev92}) is based on a double-counting for $f(\mathbf{x \cdot y})$ over $C \times C$:
\begin{equation} \label{main-id}
f(1) \cdot |C|+\sum_{\mathbf{x},\mathbf{y} \in C, \mathbf{x} \neq \mathbf{y}} f( \mathbf{x \cdot y} )=
f_0 \cdot |C|^2 + \sum_{i=1}^{\deg{f}} f_iM_i(C),
\end{equation}
where $f(t)$ is a polynomial and $C \subset \mathbb{S}^{n-1}$ is a code. Additional assumptions on $f$, $T \subset [-1,1]$ and moments $M_i(C)$ give bounds on the cardinalities of $T$-avoiding codes, minimal $T$-avoiding designs and on $h$-energy of $C$.

For any point $\mathbf{x} \in C$ and inner product $t \in I(C)$, we denote by $A_t(\mathbf{x})$
the number of points of $C$ with inner product $t$ with $\mathbf{x}$. The system of nonnegative integers $(A_t(\mathbf{x}): t \in I(C))$ is called the \textit{distance distribution
of $C$ with respect to the point $\mathbf{x}$}. If the numbers $A_t(\mathbf{x})$ do not depend on the choice of $\mathbf{x}$ for any $t \in I(C)$, then the code $C$ is called {\it distance invariant}. If the code is distance invariant, we shall omit $\mathbf{x}$ in the notation $A_t(\mathbf{x})$. 
Then the distance distribution of a distance-invariant code $C$ is naturally defined as the system of positive integers
\[ 
F(C):= \{ A_{t_i} : t_i \in I(C) \cup \{1\} \}, 
\]
where $I(C)=\{t_1,\ldots,t_d\}$ and $t_1< \cdots < t_d < t_{d+1} := 1$. 
The distance distribution of a distance-invariant $\tau$-design with $d \leq \tau-1$ can be found from a Vandermonde-type system, see~\cite[Theorem 7.4]{DGS}. Furthermore, the definition of design implies the following quadrature formula for a polynomial $f$ of degree at most $\tau$ 
\begin{equation} \label{QF}
N f_0 = \sum_{i=1}^{d+1} A_{t_i} f(t_{i}).
\end{equation}
Note that the above-mentioned Vandermonde system can be obtained from~\eqref{QF} for $f = 1,t,\dots, t^d$. 

\section{Extremal even unimodular lattices in \texorpdfstring{$\mathbb{R}^{32}$}{R32} and their codes}

\subsection{Lattices} 

The codes we are interested in are found as the sets of minimum length vectors in {\it extremal even unimodular lattices} as defined below. We first provide general definitions and then focus on dimension 32. 

A lattice $L \in \mathbb{R}^n$ is called {\it even} if all norms (squared lengths) of vectors of $L$ are even. It is called \textit{unimodular} if it coincides with its dual lattice $L^{*}:=\{ \mathbf{x} \in \mathbb{R}^n : \mathbf{x \cdot y} \in \mathbb{Z} \ \forall \mathbf{y} \, \in L \}$. If $L$ is even unimodular, then 8 divides $n$. It is known that the minimum norm of an even unimodular lattice $L$ satisfies 
\begin{equation} \label{L-bound}
    \min \{ \mathbf{x \cdot x} : \mathbf{x} \in L, x \neq 0 \} \leq 2\left[\frac{n}{24}\right]+2. 
\end{equation}
The vectors of $L$ can be partitioned into \emph{layers}
\[ L_m:=\{ \mathbf{x} \in L: \mathbf{x \cdot x} =m \}. \]
An even unimodular lattice is called \textit{extremal} if the bound~\eqref{L-bound} is attained; in other words, in the extremal lattices the layers $L_m$ with $m<2[n/24]+2$ are empty. 

In $\mathbb{R}^{32}$ the extremal even unimodular lattices have not been classified yet. King~\cite{King} proved that there are more than $10^7$ non-isometric extremal even unimodular lattices without roots.
All these lattices attain the bound~\eqref{L-bound} and are extreme; i.e., they realize a local maximum of the density function~\cite{nebe2013boris}. 
Recently these lattices were examined in terms of optimality for the Gaussian potential function
\[
E(\alpha, L) = \sum_{\mathbf{x} \in L} e^{-\alpha (\mathbf{x \cdot x})},
\]
see Heimendahl--Marafioti--Thiemeyer--Vallentin--Zimmermann~\cite{heimendahl2023critical}. In particular, they show that for $\alpha = \pi$ such a lattice is a local maximum.

We consider the set $\mathcal C$ of spherical codes obtained by rescaling the first non-empty layer $L_4$ of an extremal even unimodular lattice $L$.

\subsection{The set \texorpdfstring{$\mathcal C$}{C} of codes on \texorpdfstring{$\mathbb{S}^{31}$}{S31} from minimum norm 4 vectors}

The first nontrivial layer $L_4$ of an extremal even unimodular lattice in $\mathbb{R}^{32}$ contains 146880 vectors. After rescaling to $\mathbb{S}^{31}$, we obtain a spherical code $C \in \mathcal{C}$ with cardinality $|C|=146880$, set of inner products 
\begin{equation}\label{InnerProductsSet}
    I(C) = \left \{ -1, - \frac{1}{2}, - \frac{1}{4}, 0,\frac{1}{4}, \frac{1}{2} \right\}.
\end{equation}
Furthermore, $C$ is a spherical $7\sfrac{1}{2}$-design; i.e., a 7-design with $M_{10}(C) = 0$. Since $|I(C)|=6<\tau=7$, $C$ is distance regular~\cite[Theorem 7.4]{DGS}. Its distance distribution is given by
\begin{equation}\label{DistanceDistribution}
    \mathcal{F}(C)=\{1,1240,31744,80910,31744,1240,1\}.
\end{equation}
The quadrature rule~\eqref{QF} takes the form
\begin{equation}\label{QF146}
  146880f_0 =  f(-1) + f(1) + 1240 \left ( f \left(-\frac{1}{2} \right) + f\left(\frac{1}{2} \right) \right) + 31744 \left  ( f\left(-\frac{1}{4}\right) + f\left(\frac{1}{4}\right)\right),  
\end{equation}
holding true for every polynomial $f$ of degree at most 7.

For certain specific choices of $T$, we shall prove that all these codes are simultaneously maximal spherical $T$-avoiding codes among the codes that are 3-designs (an additional  condition of such a type is inevitable, see Remark~\ref{Leech-obstruction}).
Also, these codes are minimal (tight) $T$-avoiding spherical 7-designs.

\subsection{Lattices in \texorpdfstring{$\mathbb{R}^{32}$}{R32} obtained from self-dual binary codes}

Of the more than $10^7$  extremal even unimodular lattices in dimension $32$, there are few that have been studied in detail~\cite{venkov1989ganzzahlige}.
Five of them can be called special since they can be obtained from the five (see Conway--Pless~\cite{CP} for the classification) extremal doubly-even binary self-dual codes of length 32, denoted by $CP_{i},$ $i=1,2,\ldots ,5$. As it follows from their self-duality and extremality, all these five codes have dimension 16 and minimum distance 8. 

Let $C$ be one of those five codes. We define a lattice $L(C)$
using construction B (see~\cite[Chapter 7]{CS}) followed
by an extension (doubling). One first maps $C$ into $\mathbb{R}^{32}$ by
\begin{equation*}
G(C) = \left\{ \mathbf{x=}(x_{1},\ldots x_{32}) \in \mathbb{Z}
^{32}~|~x_{1}+\cdots +x_{32}\equiv 0 \pmod{4},~\mathbf{x} \pmod{2}
\in C\right\} .
\end{equation*}
Then a ``doubling'' is performed via 
\begin{equation*}
L(C)=\frac{1}{\sqrt{2}} G(C)\cup \frac{1}{\sqrt{2}}\left( \left (\frac{1}{2},\frac{1}{2},\cdots ,\frac{1}{2} \right) + G(C)\right).
\end{equation*}
The resulting lattice is extremal, even, and unimodular. We are mostly interested in $L(C)_{4}$, the first non-empty layer of $L(C)$. 

In search of classifying patterns, Venkov~\cite{Ven82} introduced and investigated the parameter 
\begin{equation*}
D(L) = \{ e_{2,2}(\mathbf{x},\mathbf{z}) = \left\vert \left\{ 
\mathbf{y} \in L(C)_{4}~|~\mathbf{x} \cdot \mathbf{y}=2,~\mathbf{z}\cdot \mathbf{y}=2\right\} \right\vert \, , \, \mathbf{x},\mathbf{z} \in L(C)_{4} \mbox{ with } (\mathbf{x},\mathbf{z}) = 0\}.
\end{equation*}
He showed that $D(L)$ consists of even integers between $0$ and $60$. 
We see that $D(L(CP_i))$ contains the maximum value $60$ as follows.
The vectors 
\[ \mathbf{x}=\frac{1}{\sqrt{2}}(0,0,\ldots ,0,2,2), \ \ 
\mathbf{z}=\frac{1}{\sqrt{2}}(0,0,\ldots ,0,-2,2) \] 
(note that $\mathbf{x\cdot z=0}$) and 
\[ \mathbf{y}_{1} = \frac{1}{\sqrt{2}}(2,0,\ldots ,0,2),\,\mathbf{y}_{2}=\frac{1}{\sqrt{2}}(0,2,\ldots ,0,0,2),\ldots ,\,\mathbf{y}_{30}=\frac{1}{\sqrt{2}}(0,0,\ldots ,2,0,2), \]
\[ \mathbf{y}_{31} = \frac{1}{\sqrt{2}}(-2,0,\ldots ,0,2),\,\mathbf{y}_{32}=\frac{1}{\sqrt{2}}(0,-2,\ldots ,0,0,2),\ldots ,\,\mathbf{y}_{60}=
\frac{1}{\sqrt{2}}(0,0,\ldots ,-2,0,2) \]
belong to the norm 4 layer of $L(C)$ and $\mathbf{x\cdot y}_{i}=\mathbf{z\cdot y}_{i}=2$ for $i=1,2,\ldots ,60.$ 

Venkov showed that every lattice $L$ with $60 \in D(L)$ can be constructed as above, and so such $L$ coincides with one of the $L(CP_i)$.
In order to show that the five lattices $L(CP_{i})$ are not isometric Koch and Venkov~\cite{venkov1989ganzzahlige} classified $D(L(CP_i))$ for $1 \leq i \leq 5$ and noted that these sets are distinct.

We confirm this result using the computational algebra system Magma~\cite{Mag}, by showing that the five lattices have automorphism groups of different orders, namely:
\begin{eqnarray*}
975175680 &=&2^{21}\cdot 3\cdot 5\cdot 31, \\
48126558103142400 &=&2^{31}\cdot 3^{5}\cdot 5^{2}\cdot 7\cdot 17\cdot 31, \\
91321742131200 &=&2^{31}\cdot 3^{5}\cdot 5^{2}\cdot 7, \\
16911433728 &=&2^{28}\cdot 3^{2}\cdot 7, \\
1509949440 &=&2^{25}\cdot 3^{2}\cdot 5.
\end{eqnarray*}
Currently in the Magma database of lattices, there are 19 non-isometric
extremal even unimodular lattices of dimension 32. Among them, the
lattices $L(CP_{i}),$ $i=2,3,4,5,$\ have the largest automorphism groups. 
There is only one lattice with an automorphism group order greater than 
$2^{21}\cdot 3\cdot 5\cdot 31$ and that number is $1244160000=2^{13}\cdot
3^{5}\cdot 5^{4}.$ The least automorphism group order is $12288=2^{12}\cdot
3.$

\section{Maximal \texorpdfstring{$T$}{T}-avoiding (1/2)-codes in 32 dimensions}

In this section we prove that every code $C \in \mathcal{C}$ is a maximal $T$-avoiding $(1/2)$-code for $T=(0,1/4)$ under the additional assumption that the code is at least a 3-design. As said above, all such codes have cardinality 146880 and are, in fact, spherical $7\sfrac{1}{2}$-designs. 

\begin{remark} \label{Leech-obstruction}
One may be tempted to favor the embedding of the kissing configuration of the Leech lattice as a maximal $T$-avoiding $(1/2)$-code in 32 dimensions for some $T$. However, an easy calculation shows that the second moment of that embedding is positive; i.e., it is not even a 2-design. This prevents a contradiction when maximality is proven via polynomials in~\eqref{main-id} with negative coefficient $f_2$ for codes which are at least 2-designs. Therefore, we can possibly have maximal codes with less than 196560 points if we additionally require that our codes are at least spherical 2-designs\footnote{In fact, our proofs will require strength 3.}. 
\end{remark}

\begin{thm} \label{146880-codes-t1}
Let $C \subset \mathbb{S}^{31}$ be a $(0,1/4)$-avoiding $(1/2)$-code that is (at least) a spherical 3-design. Then $|C| \leq 146880$. 
This bound is attained by every code $C \in \mathcal{C}$.
\end{thm}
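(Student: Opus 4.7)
The plan is to apply the linear programming framework via identity~\eqref{main-id}. Under the $3$-design hypothesis, $M_1(C) = M_2(C) = M_3(C) = 0$, so~\eqref{main-id} reduces to
\[
f(1)\,|C| + \sum_{\mathbf{x} \neq \mathbf{y}\in C} f(\mathbf{x} \cdot \mathbf{y}) = f_0\,|C|^2 + \sum_{i \geq 4} f_i\,M_i(C).
\]
It therefore suffices to exhibit a polynomial $f$ with (i) $f(t) \leq 0$ on the admissible range $[-1,0]\cup[1/4,1/2]$, (ii) $f_0 > 0$ and $f_i \geq 0$ for all $i \geq 4$, and (iii) $f(1)/f_0 = 146880$: then the left-hand double sum is non-positive and the right-hand moment sum is non-negative, yielding $f(1)|C| \geq f_0|C|^2$, that is, $|C| \leq 146880$.

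The structure of $f$ is dictated by complementary slackness applied to any $C \in \mathcal{C}$ that saturates the bound: $f$ must vanish on $I(C)\setminus\{1\} = \{-1,-1/2,-1/4,0,1/4,1/2\}$, and $f_iM_i(C) = 0$ for all $i \geq 4$. Since codes in $\mathcal{C}$ are spherical $7\sfrac{1}{2}$-designs, $M_i(C) = 0$ for $i = 1,\dots,7$ and $i = 10$ (and for all odd $i$ by antipodality), whereas $M_8(C)$ is in general nonzero. A degree-$7$ polynomial with the required six roots has an alternating sign pattern incompatible with (i), and a degree-$8$ polynomial would force a spurious $f_8M_8(C)$ contribution, so the smallest feasible degree is $10$. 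I therefore propose
\[
f(t) = (t+1)(t+1/2)^2(t+1/4)^2\, t\, (t-1/4)(t-1/2)(t^2 + \alpha t + \beta),
\]
in which the double roots at $-1/2,-1/4$ secure the correct sign pattern---indeed, $(t+1/2)^2(t+1/4)^2 \geq 0$ and $(t+1)\,t\,(t-1/4)(t-1/2)$ is non-positive on $[-1,0]\cup[1/4,1/2]$---while the trailing quadratic is arranged to be strictly positive on $[-1,1/2]$. The two parameters $\alpha,\beta$ will be fixed by the linear conditions $f_8 = 0$, $f_9 = 0$.

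To execute this, I expand $f = \sum_{i=0}^{10} f_i\,P_i^{(32)}(t)$ using the three-term recurrence $(i+30)\,P_{i+1}^{(32)}(t) = (2i+30)\,t\,P_i^{(32)}(t) - i\,P_{i-1}^{(32)}(t)$ starting from $P_0^{(32)} = 1$ and $P_1^{(32)}(t) = t$, solve the two linear equations $f_8 = f_9 = 0$ for $(\alpha,\beta)$, and verify: the quadratic remains strictly positive on $[-1,1/2]$, $f_0 > 0$, and $f_i \geq 0$ for $i \in \{4,5,6,7,10\}$. Granting these, applying~\eqref{main-id} to any specific $C\in\mathcal{C}$ collapses both the double sum (by the vanishing of $f$ on $I(C)$) and the moment sum (to $f_{10}\,M_{10}(C) = 0$), forcing $f(1)/f_0 = 146880$ and establishing~(iii). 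The main obstacle is exactly this Gegenbauer computation---routine with a computer algebra system but lengthy to carry out by hand---which constitutes the substantive content of the proof.

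For attainment, every $C\in\mathcal{C}$ has $|C|=146880$, is a spherical $7$-design (hence a $3$-design), and has $I(C)\cap(0,1/4)=\varnothing$ with maximum inner product $1/2$; thus every such $C$ is a valid $(0,1/4)$-avoiding $(1/2)$-code that saturates the bound.
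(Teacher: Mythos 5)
Your overall framework matches the paper's: apply \eqref{main-id} with a degree-10 polynomial vanishing on $\{-1,-1/2,-1/4,0,1/4,1/2\}$, with the correct sign on $[-1,0]\cup[1/4,1/2]$, nonnegative Gegenbauer coefficients for $i\geq 4$, and $f_8=0$ forced by the nonvanishing of $M_8(C)$ for $C\in\mathcal{C}$. Your sign analysis of the factored form is also correct, and the attainment argument is fine.

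However, there is a genuine gap in how you pin down the trailing quadratic. You impose the two linear conditions $f_8=0$ and $f_9=0$ to determine $(\alpha,\beta)$, but $f_9=0$ is unmotivated: by your own observation, $M_9(C)=0$ by antipodality, so complementary slackness only requires $f_9\geq 0$, not $f_9=0$. The paper instead makes the structural choice $t^2+\alpha t+\beta=(t-1/2)^2$ (a triple root at the maximal cosine), which delivers $f_8=0$ directly while $f_9=\frac{3441}{11776}>0$. Thus your system $f_8=f_9=0$ selects a \emph{different} $(\alpha,\beta)$ from the paper's $(-1,1/4)$, and with it a different polynomial whose Gegenbauer coefficients and sign pattern you have not computed. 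Since the whole argument rests on verifying $f_0>0$, $f_i\geq 0$ for $i\in\{4,\dots,10\}$, and positivity of the quadratic on $[-1,1/2]$, the clause ``granting these'' is exactly where the proof is missing: the $(\alpha,\beta)$ you would obtain might well fail one of these inequalities (e.g., the quadratic could acquire a root inside $[-1,1/2]$, or some $f_i$ could go negative), and nothing in your proposal rules this out. To close the gap you would either need to carry out the Gegenbauer computation for your polynomial and check the inequalities explicitly, or adopt the paper's simpler ansatz $(t-1/2)^3$ for which the explicit coefficients are recorded and verified. You also skip over why degree $9$ cannot work when arguing the minimal degree is $10$, though that is a minor point compared with the unverified coefficient claim.
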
 

\begin{proof}
Assume that $C \subset \mathbb{S}^{31}$ is a $(0,1/4)$-avoiding $(1/2)$-code and a spherical 3-design. Consider~\eqref{main-id} with $C$ and the degree 10 polynomial
\[ f(t)= (t+1)\left(t + \frac{1}{2}\right)^2 \left(t+\frac{1}{4}\right)^2 t\left(t-\frac{1}{4}\right) \left(t-\frac{1}{2}\right)^3. \]
Note that $f(1)=675/1024$ and $f_0=5/1114112$ in the Gegenbauer expansion~\eqref{geg-exp} of $f$. 

Since $f(t) \leq 0$ for every $t \in [-1,1/2] \setminus (0,1/4)$, the left-hand side of~\eqref{main-id} does not exceed $f(1)|C|=675|C|/1024$. For the right-hand side we first see that since $M_i(C)=0$ for $i=1,2,3$, the signs of the coefficients $f_i$, $i=1,2,3$, do not matter (in fact, we have $f_2<0$ and $f_3<0$; see the Leech lattice obstruction Remark~\ref{Leech-obstruction}).  Next, we observe that $f_i>0$ for $i \in \{4,\ldots,10\} \setminus \{8\}$, while $f_8=0$. We conclude that the right-hand side of~\eqref{main-id} is at least $f_0|C|^2=5|C|^2/1114112$. This gives 
\[ |C| \leq \frac{f(1)}{f_0} = 146880. \]

It follows from the above argument that any attaining code $C$ must be a spherical 7-design with vanishing tenth moment, i.e. a spherical $7\sfrac{1}{2}$-design. Moreover, the inner products of $C$ must belong to $\{-1, \pm 1/2, \pm 1/4, 0 \}$. Every code $C \in \mathcal{C}$
possesses these properties and, therefore, attains the bound. This completes the proof. 
\end{proof}

The polynomial for Theorem~\ref{146880-codes-t1} was constructed based on the properties of the attaining codes. Indeed, the fact that the codes are antipodal spherical 7-designs with $M_{10}=0$ prompts us to use a degree-10 polynomial with $f_8=0$. Further, the necessary vanishing at the inner products and the endpoints of the interval $T$ determine the roots of the polynomial to be as in the set $I(C)$ from~\eqref{InnerProductsSet} with the corresponding multiplicities. Lucky or not, the resulting polynomial is good as shown in the proof. 

We conclude this section with the explicit Gegenbauer expansion of the polynomial $f$ used in the above proof. It is 
\[ f(t)=\sum_{i=0}^{10} f_iP_i^{(32)}(t), \]
with
\[ f_0=\frac{5}{1114112}, \ f_1=\frac{65}{992256}, \ f_2=-\frac{31}{196608},
\ f_3=-\frac{93}{165376}, \ f_4=\frac{217}{417792}, \] \[ f_5=\frac{899}{58368}, \ f_6=\frac{2387}{188416},\ f_7=\frac{20119}{894976}, \ f_8=0, \ f_9=\frac{3441}{11776}, \ f_{10}=\frac{14911}{47104}. \]

\section{Tight \texorpdfstring{$T$}{T}-avoiding spherical 7-designs in 32 dimensions}

In this section we show how minimality results for $T$-avoiding spherical designs on $\mathbb{S}^{31}$ with some $T$ can be derived. Such designs were called {\it tight $T$-avoiding} in~\cite{BCD2025}. We shall prove that every code $C \in \mathcal{C}$ is a tight $T$-avoiding spherical $7$-design with $T = (-1/4,0) \cup (1/4,1/2)$. 

\begin{thm} \label{146880-des-t1}
Let $C \subset \mathbb{S}^{31}$ be a $T$-avoiding spherical 7-design, where 
$T=(-1/4,0) \cup (1/4,1/2)$. Then $|C| \geq 146880$. 
This bound is attained by every code $C \in \mathcal{C}$.
\end{thm}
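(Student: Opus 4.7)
The plan is to mirror the linear-programming argument of Theorem~\ref{146880-codes-t1}, but to arrange the polynomial so that identity~\eqref{main-id} yields a \emph{lower} bound on $|C|$. I will construct a polynomial $f$ of degree at most $7$ satisfying $f(t)\ge 0$ on the allowed locus
\[
[-1,1)\setminus T=[-1,-1/4]\cup[0,1/4]\cup[1/2,1),
\]
together with $f_{0}>0$ in its Gegenbauer expansion~\eqref{geg-exp}. Because $C$ is a spherical $7$-design, the moments $M_{i}(C)$ vanish for $1\le i\le 7$, so~\eqref{main-id} collapses to
\[
f(1)\,|C|+\sum_{\mathbf{x}\neq\mathbf{y}\in C}f(\mathbf{x}\cdot\mathbf{y})=f_{0}\,|C|^{2}.
\]
Since $I(C)\subset[-1,1)\setminus T$, the nonnegativity of $f$ on that set gives $f(1)|C|\le f_{0}|C|^{2}$, hence $|C|\ge f(1)/f_{0}$.

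The natural candidate making this chain tight at $146880$ is the polynomial whose roots are the six elements of $I(C)\setminus\{1\}$ from~\eqref{InnerProductsSet}, with a multiplicity chosen to fix the signs:
\[
f(t)=(t+1)\left(t+\tfrac{1}{2}\right)^{2}\left(t+\tfrac{1}{4}\right)t\left(t-\tfrac{1}{4}\right)\left(t-\tfrac{1}{2}\right).
\]
A factor-by-factor sign analysis on the four open subintervals $(-1,-1/2)$, $(-1/2,-1/4)$, $(0,1/4)$, $(1/2,1)$ of the allowed locus confirms $f\ge 0$ there; the two subintervals on which $f$ becomes negative, namely $(-1/4,0)$ and $(1/4,1/2)$, are precisely the two components of $T$, where the sign is irrelevant.

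It remains to check that $f_{0}>0$ and that $f(1)/f_{0}=146880$. Rather than expand $f$ in Gegenbauer polynomials directly, I will invoke the quadrature rule~\eqref{QF146}, valid for every polynomial of degree at most $7$: the roots of $f$ kill every term on the right of~\eqref{QF146} except $f(1)$, yielding $146880\,f_{0}=f(1)>0$, so $f_{0}>0$ and the bound is $146880$. Every $C\in\mathcal{C}$ then achieves equality because all of its inner products lie in the root set of $f$, making the middle sum in~\eqref{main-id} vanish and all inequalities become equalities.

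The only real design choice, and the point that would have derailed a more naive attempt, is the multiplicity $2$ at $t=-1/2$: a simple root there would produce a sign change on $(-1,-1/2)$, which is part of the allowed locus rather than of $T$. Restricting to $\deg f\le 7$ keeps us inside the hypothesis $M_{1}=\cdots=M_{7}=0$ and removes any need to control the Gegenbauer coefficients $f_{i}$ for $i\ge 8$.
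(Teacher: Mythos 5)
Your proof is correct and uses exactly the same degree-$7$ polynomial $f$ and the same application of identity~\eqref{main-id} as the paper; the sign analysis on $[-1,1]\setminus T$ and the conclusion $|C|\ge f(1)/f_0$ are identical. The one difference is how you establish $f(1)/f_0=146880$: the paper simply reports the explicit Gegenbauer coefficient $f_0=1/69632$ (together with $f(1)=135/64$), whereas you invoke the quadrature rule~\eqref{QF146} and the fact that $f$ vanishes at every quadrature node except $t=1$ to read off $146880\,f_0=f(1)>0$ directly. This is a valid and pleasant shortcut, relying only on the previously established existence and distance distribution of the codes in $\mathcal{C}$, so there is no circularity; the underlying approach, however, is the same as the paper's.
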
 

\begin{proof}
We apply~\eqref{main-id} with the degree 7 polynomial
\[ f(t)= t(t+1)\left(t + \frac{1}{2}\right)^2 \left(t+\frac{1}{4}\right) \left(t-\frac{1}{4}\right) \left(t-\frac{1}{2}\right). \]
It is easy to see that $f(t) \geq 0$ for every $t \in [-1,1] \setminus T$ for the left-hand side and that the right-hand side is equal to $f_0|C|^2$. Since $f(1)=135/64$ and $f_0=1/69632$, we obtain $|C| \geq f(1)/f_0=146880$. 
\end{proof}

Again, the properties of the attaining designs suggest the right polynomial for use in~\eqref{main-id}. It has degree 7 to provide $\sum_{i=1}^7 f_i M_i(C)=0$ in the right-hand side and zeros in $I(C)$ from~\eqref{InnerProductsSet} with appropriate multiplicities to make the sum in the left-hand side nonnegative.

\section{Universally optimal \texorpdfstring{$T$}{T}-avoiding codes in 32 dimensions}

Let $L$ be any fixed even unimodular extremal lattice in $\mathbb{R}^{32}$ and $L_4$ be the collection of $146880$ vectors of minimal norm. Among several possible choices of $T$ we consider the symmetric case, where 
$$T:=\left(-\frac12,-\frac14\right)\cup \left(\frac14,\frac12\right).$$ We denote the class of $T$-avoiding codes on $\mathbb{S}^{31}$ by 
\begin{equation}\label{C_Tsym}
    \mathcal{C}_{T} := \{C \subset\mathbb{S}^{31}: |C|=146880,\quad I(C)\cap T= \varnothing \} .
\end{equation}
The following theorem establishes that for any absolutely monotone potential function $h$ a universal error bound on the $h$-energy of codes in the class $\mathcal{C}_{T}$ holds.

\begin{thm} \label{146880-en-t1} Let $h$ be absolutely monotone with $h^{(8)}>0$ in $(-1,1)$ and $C \in \mathcal{C}_{T}$. Then 
\begin{equation}\label{E146880} 
\begin{split}
E_h(C) \geq & \ 146880\left[ h(-1)+1240h\left(-\frac{1}{2}\right)+31744h\left(-\frac{1}{4}\right) + 80910h\left(0\right) \right.\\
& \qquad \qquad +\left.31744h\left( \frac{1}{4}\right)+1240h\left(\frac{1}{2}\right) \right].
\end{split}
\end{equation}
This bound is attained by every code $C \in \mathcal{C}$.
\end{thm}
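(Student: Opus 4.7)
The plan is to apply the linear programming identity~\eqref{main-id} in the spirit of the proofs of Theorems~\ref{146880-codes-t1} and~\ref{146880-des-t1}, but now with an auxiliary polynomial $f=f_h$ built from $h$ by Hermite interpolation at the six inner products of an attaining code. Specifically, let $f$ be the unique polynomial of degree at most $7$ determined by the eight conditions
\[
f(t_j)=h(t_j)\ \text{for}\ t_j\in\left\{-1,-\tfrac12,-\tfrac14,0,\tfrac14,\tfrac12\right\},\quad f'(-1)=h'(-1),\quad f'(0)=h'(0).
\]
The standard Hermite remainder formula then gives
\[
h(t)-f(t)=\frac{h^{(8)}(\xi_t)}{8!}\,Q(t),\qquad Q(t):=(t+1)^{2}\,t^{2}\,\left(t^{2}-\tfrac14\right)\left(t^{2}-\tfrac{1}{16}\right),
\]
for some $\xi_t\in(-1,1)$. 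The multiplicity pattern (double zeros at $-1$ and $0$, simple zeros at $\pm\tfrac12,\pm\tfrac14$) has been arranged so that both quadratic factors in parentheses have the same sign on each of the three components of the allowed set $[-1,1)\setminus T=[-1,-\tfrac12]\cup[-\tfrac14,\tfrac14]\cup[\tfrac12,1)$, whence $Q\ge 0$ there. Combined with the hypothesis $h^{(8)}>0$ on $(-1,1)$, this gives $h\ge f$ throughout $[-1,1)\setminus T$, so that $E_h(C)\ge E_f(C)$ for every $C\in\mathcal{C}_T$.

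Applying~\eqref{main-id} to $f$ and $C$,
\[
f(1)\,|C|+E_f(C)=f_0\,|C|^{2}+\sum_{i=1}^{7}f_i\,M_i(C),
\]
so the positivity $M_i(C)\ge 0$ gives $E_h(C)\ge f_0\,|C|^2-f(1)\,|C|$ once the Gegenbauer coefficients $f_1,\dots,f_7$ are shown to be non-negative. This is where I expect the real work to lie: because $f$ is a linear function of the interpolation data $h(t_j)$, $h'(-1)$, $h'(0)$, each $f_i$ is a fixed linear functional of these values whose coefficients can be read off from the Hermite cardinal basis in dimension $n=32$. Absolute monotonicity gives $h'(-1),h'(0)\ge 0$ and $h(t_j)-h(-1)\ge 0$, while subtracting the constant $h(-1)$ from $h$ leaves every $f_i$ with $i\ge 1$ and both sides of the claimed bound unchanged, so the sign check reduces to verifying finitely many explicit numerical inequalities for the fixed $8\times 8$ change-of-basis matrix.

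Finally, I identify $f_0\,|C|^2-f(1)\,|C|$ with the right-hand side of~\eqref{E146880}. Applying the quadrature formula~\eqref{QF146} to $f$ (legal since $\deg f\le 7$ and any $C_0\in\mathcal{C}$ is a spherical $7$-design) and substituting $f(t_j)=h(t_j)$ at the six inner products yields
\[
146880\,f_0-f(1)=h(-1)+1240\!\left(h(-\tfrac12)+h(\tfrac12)\right)+31744\!\left(h(-\tfrac14)+h(\tfrac14)\right)+80910\,h(0);
\]
multiplying by $|C|=146880$ turns $f_0\,|C|^2-f(1)\,|C|$ into exactly $146880$ times the bracket in~\eqref{E146880}. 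Equality for any $C\in\mathcal{C}$ is then transparent: the inner products of $C$ lie among the $t_j$ where $h=f$, and the $7$-design property forces every $M_i(C)$ with $1\le i\le 7$ to vanish, so each intermediate inequality is in fact an equality.
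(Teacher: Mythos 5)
Your overall plan matches the paper: Hermite-interpolate $h$ at the multiset $\{-1,-1,-\tfrac12,-\tfrac14,0,0,\tfrac14,\tfrac12\}$, use the Hermite remainder formula to get $h\ge f$ on $[-1,1)\setminus T$, feed this into the linear-programming identity \eqref{main-id}, and then identify the resulting bound with the right-hand side of \eqref{E146880} via the quadrature formula for a $7$-design (indeed \eqref{QF146} as printed in the paper is missing the $80910\,f(0)$ term; your version is the correct one). The remainder factor and its sign analysis are also the same.

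The genuine gap is where you write ``this is where I expect the real work to lie'': you reduce the needed positive-definiteness of the interpolant to the claim that, after normalizing $h(-1)=0$, each Gegenbauer coefficient $f_i$ is a \emph{nonnegative} linear combination of the seven data $h(t_j)-h(-1)$ ($j\ne 1$), $h'(-1)$, $h'(0)$ — equivalently, that every Hermite cardinal function for this node set is itself positive definite. That reduction is unsound. Consider the cardinal function for the node $\tfrac14$: it must vanish (with the prescribed multiplicities) at all other nodes, so it is a scalar multiple of $(t+1)^2(t+\tfrac12)(t+\tfrac14)t^2(t-\tfrac12)$, and the normalization $\ell_{1/4}(\tfrac14)=1$ forces a negative leading coefficient ($c=-8192/75$). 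Hence its top Gegenbauer coefficient is negative, so the change-of-basis matrix you propose to check has a strictly negative entry, and the sign check fails. The difficulty is that the cone ``$h(t_j)-h(-1)\ge 0$, $h'(-1)\ge 0$, $h'(0)\ge 0$'' is strictly larger than the cone of restrictions of absolutely monotone functions, and the desired inequality $f_i\ge 0$ does not hold on the larger cone. The paper avoids this by writing the interpolant in Newton form, $H_7(t)=h(t_1)+\sum_{i=1}^{7}h[t_1,\dots,t_{i+1}]\,P_i(t)$: absolute monotonicity forces \emph{all} the divided differences to be nonnegative, and one then verifies the partial products $P_1,\dots,P_7$ are positive definite ($P_1,\dots,P_6$ trivially as products of $t+c$ with $c\ge 0$; $P_7$ by direct computation of its Gegenbauer expansion). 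Swapping your cardinal-basis argument for this Newton/divided-difference decomposition closes the gap; everything else in your proposal is sound.
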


\begin{proof}
First, we note that every code $C \in \mathcal{C}$ has the same energy as given by the right-hand side of~\eqref{E146880}. Indeed, any such code has the same inner products and distance distribution as explained in Section 3.2 and the calculation of its $h$-energy from~\eqref{h-energy} with the quadrature rule \eqref{QF146} gives the right-hand side of~\eqref{E146880}.

So, our goal is to show that~\eqref{E146880} holds for any $T$-avoiding code $C$. Consider the multiset 
$$\mathcal{I}:=\{t_1,\dots,t_8\}=\left\{-1,-1,-\frac{1}{2},-\frac{1}{4},0,0,\frac{1}{4}, \frac{1}{2}\right\}.$$
Denote by $H_7(t;h,\mathcal{I})$ the polynomial, which interpolates the potential function $h$ at the points of $\mathcal{I}$ (a repeated node indicates derivative interpolation). Newton's interpolation formula implies that
\begin{equation}\label{NewtonsFormula}
    H_7(t;h,\mathcal{I})= h(t_1)+\sum_{i=1}^7 h[t_1,\dots,t_{i+1}](t-t_1)\dots(t-t_i),
\end{equation}
where $h[t_1,\dots,t_{i+1}]$,  $i=1,\dots,7$, are the corresponding divided differences and 
$$P_i(t):=(t-t_1)\dots(t-t_i), \quad i=1,\dots,7$$ are the associated partial products. The absolute monotonicity of $h$ implies that all divided differences are non-negative. 

The positive definiteness of the partial products $P_i(t)$ can be verified directly, with which we now proceed. 
Since $t+c$ has positive Gegenbauer coefficients for any $c\geq 0$ and positive definiteness is preserved under multiplication of polynomials (this is the so-called Krein condition in~\cite{Lev98}), we conclude that the partial products $P_i$, $i=1,\dots,6$, are positive definite. The last partial product 
\[ P_7(t)=\left(t + 1\right)^2 \left(t+\frac{1}{2}\right) \left(t+\frac{1}{4}\right)t^2 \left(t-\frac{1}{4}\right)=\sum_{i=0}^7 f_iP_i^{(32)}(t), \]
with 
\[f_0=\frac{97}{104448}, \ f_1=\frac{619}{41344}, \ f_2=\frac{12245}{116736}, \ f_3=\frac{2139}{5168}, f_4=\frac{13981}{13056}, \]
\[f_5=\frac{4433}{2432}, \ f_6=\frac{11935}{7296}, \ f_7= \frac{341}{608}, \]
is explicitly seen to be positive definite. 
We next utilize the Hermite error formula, which yields (recall that $T=(-1/2,-1/4)\cup (1/4,1/2)$)
\[h(t)-H_7 (t;h,\mathcal{I})=\frac{h^{(8)}(\xi)}{8!}(t+1)^2\left(t+\frac{1}{2}\right)\left(t+\frac{1}{4}\right)t^2\left(t-\frac{1}{4}\right)\left(t-\frac{1}{2}\right)\geq 0\]
for every $t\in [-1,1] \setminus T$.

To complete the argument, we combine the $h$- and $f$-energies using~\eqref{main-id} with $C$, $H_7(t;h,\mathcal{I})$, and the above conclusions. In the left-hand side of \eqref{main-id} we estimate
\[ E_h(C) =\sum_{\mathbf{x},\mathbf{y} \in C, \mathbf{x} \neq \mathbf{y}} h(\mathbf{x \cdot y}) \geq \sum_{\mathbf{x},\mathbf{y} \in C, \mathbf{x} \neq \mathbf{y}} f(\mathbf{x \cdot y}) \]
since $h(t) \geq H_7 (t;h,\mathcal{I})$ for all inner products of $C$. The right-hand side is at least $(H_7)_0|C|^2$ since all Gegenbauer coefficients of $H_7 (t;h,\mathcal{I})$ are non-negative. Therefore, we have
\[ E_h (C)\geq 146880^2 \left( (H_7)_0-\frac{H_7(1)}{146880}\right). \]
It remains to see that the right-hand side of the last inequality is equal to the right-hand side of the bound~\eqref{E146880}. 
This fact follows from the quadrature formula \eqref{QF} written for any of the 7-designs in~$\mathcal {C}$.

If equality holds for some code $C$, then all moments $M_i(C)$, $i=1,\dots,7$, vanish, so $C$ has to be a $7$-design. Moreover, the interpolation conditions imply that the inner products of $C$ belong to the set $\{0,-1, \pm 1/2,\pm 1/4\}$ (see~\eqref{InnerProductsSet}). From~\cite[Theorem 7.4]{DGS} such a code is distance-invariant. Utilizing the quadrature formula \eqref{QF} associated with the design property, we conclude that such a code has the same distance distribution as~\eqref{DistanceDistribution}, which concludes the proof.
\end{proof}

\bigskip
{\bf Funding.} The research of the first two authors is supported, in part, by Bulgarian NSF grant KP-06-Austria/8-2025. 
The research of the third author is supported, in part, by the Lilly Endowment.

\end{document}